\newcommand{\init}{\vert_{t = 0}}
\newcommand{\abs}[1]{\vert #1 \vert}
\newcommand{\norm}[1]{\left\Vert #1 \right\Vert}
\newcommand{\bignorm}[1]{\bigl\Vert #1 \bigr\Vert}
\newcommand{\C}{\mathbb{C}}
\newcommand{\R}{\mathbb{R}}
\newcommand{\innerprod}[2]{\langle \, #1 , #2 \, \rangle}
\newcommand{\angles}[1]{\langle #1 \rangle}
\newtheorem{theorem}{Theorem}[section]
\newtheorem{lemma}[theorem]{Lemma}
\theoremstyle{definition}
\theoremstyle{remark}
\numberwithin{equation}{section} \setcounter{tocdepth}{1}
\title[Uniqueness of solution for 2d DKG]{Unconditional uniqueness in the charge class for the Dirac-Klein-Gordon equations in two space dimensions}
\author[Selberg and Tesfahun]{Sigmund Selberg and Achenef Tesfahun}
\begin{document}

\maketitle

\begin{abstract}
Recently, A.~Gr\"unrock and H.~Pecher proved global well-posedness of
the 2d Dirac-Klein-Gordon equations given initial data for the
spinor and scalar fields in $H^s$ and $H^{s+1/2} \times H^{s-1/2}$, respectively,
where $s\ge 0$, but uniqueness was only known in a contraction space of Bourgain type, strictly smaller than the natural solution space $C([0,T];
H^s \times H^{s+1/2} \times H^{s-1/2})$. Here we prove uniqueness in the latter space for $s \ge 0$. This improves a recent result of H.~Pecher, where the range $s>1/30$ was covered.
\end{abstract}

\section{Introduction}\label{section_1}
 We consider the Dirac-Klein-Gordon system (DKG) in
two space dimensions, which reads
\begin{equation}\label{DKG0}
\left\{
\begin{aligned}
   -i\bigl( \partial_t + \alpha\cdot \nabla \bigr) \psi &= -M\beta\psi+ \phi \beta \psi,
    \\
(-\square+m^2) \phi &=  \innerprod{\beta \psi}{\psi}, \quad \quad &
( \square = -\partial_t^2 + \Delta)
\end{aligned}
\right.
\end{equation}
with initial data
\begin{equation}\label{Data0}
   \psi \init = \psi_0\in H^s, \qquad \phi \init =
\phi_0\in H^{r}, \qquad
\partial_t \phi \init = \phi_1 \in H^{r-1},
\end{equation}
where $\psi(t,x)$ is the Dirac spinor, regarded as a column vector
in $\C^2$, and $\phi(t,x)$ is real-valued. Here $t\in \R, \ x\in
\R^{2}$; $M,m \ge 0$ are constants; $\nabla=(\partial_{x_1},
\partial_{x_2}, )$; $ \innerprod{u}{v} :=
\innerprod{u}{v}_{\C^2}=v^\dagger u$  for column vectors $u, v \in
\C^2$, where $v^\dagger$ is the complex conjugate transpose of $v$;
$ H^s=(1+\sqrt{-\Delta})^{-s}L^2(\R^2)$ is the standard Sobolev
space of order $s$. The Dirac matrices $\alpha^j, \beta$ satisfy
\begin{equation}
  \label{DiracIdentity1}
  \beta^\dagger=\beta, \quad (\alpha^j)^\dagger = \alpha^j,
  \quad \beta^2 = (\alpha^j)^2 = I, \quad \alpha^j \beta + \beta \alpha^j
= 0.
  \end{equation}
  A particular representation is
$$
   \alpha^1 =    \begin{pmatrix}
     0 & 1  \\
     1 &0
   \end{pmatrix},
   \qquad
   \alpha^2 =  \begin{pmatrix}
     0 & -i  \\
     i & 0
   \end{pmatrix},
   \qquad
   \beta =    \begin{pmatrix}
     1 & 0  \\
     0 & -1
   \end{pmatrix}.
$$

Local well-posedness of this problem with low-regularity data was first studied in \cite{b2001}. In~\cite{dfs2006}, P.~D'Ancona, D.~Foschi and S.~Selberg proved the existence of a local solution
$$
(\psi, \phi, \partial_t \phi )\in C\left([0,T]; H^s \times H^r \times H^{r-1} \right)
$$
for the Cauchy problem \eqref{DKG0}, \eqref{Data0} whenever
$$ s>-\frac15, \quad
\max\left(\frac14+\frac{\abs{s}}2\right)<r<\min\left(\frac34+2s,
1+s\right),$$
but uniqueness was only obtained in the contraction space, which is strictly smaller than the natural solution space $C\left([0,T]; H^s \times H^r \times H^{r-1} \right)$. The question therefore arises whether uniqueness holds in the latter space; if this is the case, one says that \emph{unconditional uniqueness} holds at the regularity $(s,r)$.

The question of unconditional uniqueness is especially interesting for $s \ge 0 $ and $r=s+1/2$, since for that range
it was proved by A.~Gr\"unrock and H.~Pecher~\cite{gp2010} that the
local solution extends globally in time. Recently, H. Pecher ~\cite{p2011} proved unconditional uniqueness for $s > 1/30$, $r=s+1/2$. In this paper we improve this to $s \ge 0$.

\begin{theorem}\label{Thm-main}
If $s\ge 0$, uniqueness of the solution to \eqref{DKG0},
\eqref{Data0} holds in
$$(\psi, \phi, \partial_t\phi) \in C\left([0,T]; H^s \times H^{s+1/2}\times H^{s-1/2} \right).$$
\end{theorem}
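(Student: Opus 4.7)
Suppose $(\psi_j,\phi_j)$, $j=1,2$, are two solutions in the space $C([0,T]; H^s \times H^{s+1/2} \times H^{s-1/2})$ with the same initial data, and set $\delta\psi = \psi_1 - \psi_2$, $\delta\phi = \phi_1 - \phi_2$. Then $(\delta\psi, \delta\phi)$ solves the linear inhomogeneous DKG system
\begin{equation*}
\begin{aligned}
-i(\partial_t + \alpha\cdot\nabla)\delta\psi &= -M\beta\,\delta\psi + \delta\phi\,\beta\psi_1 + \phi_2\,\beta\,\delta\psi, \\
(-\square + m^2)\delta\phi &= \innerprod{\beta\,\delta\psi}{\psi_1} + \innerprod{\beta\psi_2}{\delta\psi},
\end{aligned}
\end{equation*}
with vanishing data. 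The plan is to show that any solution lying in the natural space must already belong to the Bourgain-type contraction space used in \cite{dfs2006,gp2010}; the conditional uniqueness proved there will then force $(\delta\psi,\delta\phi)\equiv 0$.

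The first step is to diagonalize the Dirac operator via the spectral projections $\Pi_\pm$ associated to the symbol $\alpha\cdot\xi/\abs{\xi}$, so that each $\psi_\pm := \Pi_\pm\psi$ satisfies a half-wave equation coupled to the Klein-Gordon field. The bilinear forms $\innerprod{\beta\psi}{\psi}$ and $\phi\,\beta\psi$ then exhibit the null structure isolated in \cite{dfs2006}: after the projections are inserted, the relevant bilinear symbols vanish at the rate of $\vangle(\pm\xi,\pm\eta)$ when the Fourier variables become parallel, and this angular factor offsets the loss of derivatives that would otherwise appear at the charge-class endpoint.

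The second step is to prove, via Littlewood--Paley decomposition together with the $L^2$-based Strichartz and null-form estimates on $\R^{1+2}$, a suite of bilinear inequalities controlling the Duhamel iterates of $\delta\psi$ and $\delta\phi$ in Bourgain spaces $X^{s,b}_{\pm}$ (for the half-wave equations) and $X^{s+1/2,b}_{\mathrm{KG}}$ (for the Klein-Gordon equation) with $b$ slightly above $1/2$. Each such estimate should carry a small factor $T^{\varepsilon}$ on a short subinterval, so that a Gronwall-type argument forces $\delta\psi,\delta\phi\equiv 0$ locally; iteration then covers $[0,T]$. Since the coefficients $\psi_j,\phi_j$ are only assumed to be \emph{continuous} in time (not in any Bourgain norm), a preliminary approximation step replacing them by smooth truncations and then passing to the limit in a suitable weak norm is required in order to apply the bilinear machinery.

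The hard part is the endpoint case $s=0$: the standard $X^{s,1/2+\varepsilon}$ scale leaves no room, and the estimates that yielded $s>1/30$ in \cite{p2011} carry a positive derivative loss in the high--high Dirac interaction. To remove this loss one must use the null structure sharply---for instance by substituting an atomic $U^2/V^2$-type space for $X^{s,1/2}$, or by placing the bilinear right-hand sides in an $\ell^1$-summed $X^{s,-1/2,1}$ norm, and by a careful dyadic splitting of the high--high Dirac pairings according to the angle between the $\pm$-projections. The endpoint bilinear analysis for $\innerprod{\beta\,\delta\psi}{\psi_1}$, in combination with the approximation step for coefficients that are merely continuous in time, is where the bulk of the technical work will lie.
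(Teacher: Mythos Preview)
Your overall strategy---show that any solution in $C([0,T];H^s\times H^{s+1/2}\times H^{s-1/2})$ already belongs to the $X^{s,b}$ contraction space, then invoke the conditional uniqueness from \cite{dfs2006}---is exactly the paper's. However, your proposed mechanism for getting from the natural space into the contraction space is different from the paper's and, as written, incomplete.

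The paper does \emph{not} use a Gronwall argument on the difference, an approximation of the merely continuous-in-time coefficients, or any $U^2/V^2$-type endpoint refinement. Instead it follows an idea of Zhou \cite{z2000}: a finite bootstrap that iteratively improves the $X^{s,b}_\pm$ regularity of the inhomogeneous (Duhamel) parts $\Psi_\pm,\Phi_\pm$ of a \emph{single} solution. The starting point is the trivial observation $\psi_\pm\in X^{0,0}_\pm(S_T)$, which is immediate from $C_tL^2_x$. A plain Sobolev product estimate then gives $\Psi_\pm\in X^{-1/2,1}_\pm$, and interpolation with $X^{0,0}_\pm$ yields the whole scale $X^{-\alpha/2,\alpha}_\pm$ for $\alpha\in[0,1]$. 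Feeding this into the Klein--Gordon Duhamel term via the null-form bound \eqref{NullFormEstimate}, the angle estimate of Lemma~\ref{Lemma-angle}, and the $H^{s,b}$ product estimates of Theorem~\ref{Thm-prod} yields $\Phi_\pm\in X^{1/4-\varepsilon,1/2+\varepsilon}_\pm$; interpolating and feeding back improves $\Psi_\pm$; and so on. Three rounds of this alternation reach $X^{-5/32-,1/2+}_\pm\times X^{11/32-,1/2+}_\pm$, the known uniqueness class.

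The gap in your proposal is that you never identify this interpolation-and-feedback mechanism, and instead speculate about sharper function spaces to close an endpoint estimate directly. That direct closure is precisely what fails (and what limited \cite{p2011} to $s>1/30$); the paper sidesteps it by allowing the early iterates to \emph{lose} spatial derivatives (note the negative indices $-\tfrac12$, $-\tfrac{7}{32}$, $-\tfrac{5}{32}$ on $\Psi_\pm$) in exchange for gaining $b$-regularity above $\tfrac12$, and then recovers the derivatives over successive rounds. The approximation step you flag as necessary is not needed: the initial $X^{0,0}_\pm$ membership comes for free from $C_tL^2_x$, and from there Theorem~\ref{Thm-prod} applies directly in the restriction spaces.
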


To prove this we rely on the null structure of DKG, found in \cite{dfs2006}, and on bilinear space-time estimates of Klainerman-Machedon type proved in~\cite{dfs2010}. Using also an idea of Zhou~\cite{z2000} we iteratively improve the known regularity of the solution, until we reach a space where uniqueness is known by the results in~\cite{dfs2006}.

Some notation: In estimates we shall use $X \lesssim Y$ as shorthand for $X \le CY$,
where $C\gg 1$ is a constant. We write
$X\approx Y$ for $X\lesssim Y\lesssim X$. Throughout,
$\varepsilon$ is understood to be a sufficiently small positive
number. The space-time Fourier transform of a function $u(t,x)$ is denoted $\mathcal F u(\tau,\xi) = \widetilde u(\tau,\xi)$.

\section{Null structure and bilinear estimates}\label{section 2}
To uncover the null structure of the system we follow~\cite{dfs2006} and use the Dirac
projections
$$
P_{\pm}(\xi) = \frac{1}{2} \left( I \pm \frac{\xi}{\abs{\xi}} \cdot \alpha \right)
$$
to split $\psi = \psi_+  +  \psi_-$, where $\psi_\pm = P_\pm(D)
\psi$. Here $D = -i\nabla$, which has Fourier symbol $\xi$. Applying
$P_\pm(D)$ to the Dirac equation in \eqref{DKG0}, and using the
identities $\alpha \cdot D = \abs{D} P_+(D) - \abs{D}P_-(D)$,
$P^2_\pm(D)= P_\pm(D)$, $P_\pm(D)P_\mp(D)=0$ and
$P_\pm(D)\beta=\beta P_\mp(D)$,
 the Dirac equation becomes
  \begin{equation*}\label{Dirac}
   \bigl( -i\partial_t \pm\abs{D} \bigr) \psi_\pm = -M\beta\psi_\mp + P_\pm(D)(\phi \beta \psi).
   \end{equation*}
On the other hand, splitting $\phi =
\phi_+  +  \phi_-$, where $$\phi_\pm =\frac12\left(\phi\pm i
\angles{D}_m^{-1} \phi_t\right), \qquad  \angles{D}_m
=\sqrt{m^2+\abs{D}^2}, $$ the Klein-Gordon part of the system
becomes
\begin{equation*}\label{KG}
    \bigl( -i\partial_t \pm\angles{D}_m \bigr) \phi_\pm  = \mp(2\angles{D}_m)^{-1} \innerprod{\beta
  \psi}{\psi}.
   \end{equation*}
Thus, the DKG system has been rewritten as
\begin{equation}\label{DKG1}
\left\{
\begin{aligned}
  & \bigl( -i\partial_t \pm\abs{D} \bigr) \psi_\pm = -M\beta\psi_\mp + P_\pm(D)(\phi \beta \psi),
    \\
  & \bigl( -i\partial_t \pm\angles{D}_m \bigr) \phi_\pm  = \mp (2\angles{D}_m)^{-1} \innerprod{\beta
  \psi}{\psi},
\end{aligned}
\right.
\end{equation}
with initial data
\begin{equation}\label{Data1}
\psi_\pm(0)=P_\pm(D)\psi_0 \in  H^s, \qquad
\phi_\pm(0)=\frac12\left(\phi_0\pm i
\angles{D}_m^{-1}\phi_1\right)\in H^{s+\frac12}.
\end{equation}

Theorem \ref{Thm-main} then reduces to the following.
\begin{theorem}\label{Thm-main-1}
If $s\ge 0$, uniqueness of the solution to \eqref{DKG1},
\eqref{Data1} holds in
$$(\psi_\pm, \phi_\pm) \in C\left([0,T]; H^s \times H^{s+1/2}\right).$$
\end{theorem}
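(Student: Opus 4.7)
The strategy, following the idea of Zhou~\cite{z2000} cited in the introduction, is to show that any solution $(\psi_\pm, \phi_\pm) \in C([0,T]; H^s \times H^{s+1/2})$ of \eqref{DKG1}, \eqref{Data1} automatically lies in a Bourgain-type space $X^{s,1/2+\varepsilon}_\pm \times X^{s+1/2,1/2+\varepsilon}_\pm$ adapted to the half-wave and Klein-Gordon symbols. Uniqueness in such a space is already known from~\cite{dfs2006} by the standard contraction mapping argument, so this automatic upgrade implies the desired unconditional uniqueness.

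After a smooth time truncation $\chi(t/T)$ we obtain functions defined on all of $\R \times \R^2$ whose $X^{s,b}_\pm$ norms are meaningful. The linear part of the Duhamel representation already lies in $X^{s,b}_\pm$ for every $b$, so the task reduces to estimating the Duhamel integrals of the nonlinearities $P_\pm(D)(\phi\beta\psi)$ and $\angles{D}_m^{-1}\innerprod{\beta\psi}{\psi}$ in the appropriate $X^{s,b-1}_\pm$ spaces. The null structure uncovered in~\cite{dfs2006} recasts the first product in a form that exhibits angular cancellation between $\phi$ and $\psi_\pm$, and the second in one exhibiting cancellation between $\psi_+$ and $\psi_-$; feeding these representations into the bilinear estimates of Klainerman-Machedon type proved in~\cite{dfs2010} should yield a small but positive gain $\delta > 0$ in the $b$-index at each application.

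The bootstrap then proceeds as follows. Starting from a low base regularity $b_0 > 0$, we iteratively show that $(\psi_\pm, \phi_\pm) \in X^{s,b_k}_\pm \times X^{s+1/2,b_k}_\pm$ implies the analogous inclusion with $b_{k+1} = b_k + \delta$. After finitely many steps $b_k$ exceeds $1/2$, and the uniqueness result from~\cite{dfs2006} closes the argument.

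The principal difficulty will be the endpoint case $s = 0$, where the bilinear estimates must be applied at their sharpest admissible exponents and there is no spatial regularity to trade; this makes the full strength of both the null structure and the Klainerman-Machedon estimates essential, and Pecher's paper~\cite{p2011} fell short of $s=0$ precisely because of this loss. A secondary difficulty is initiating the bootstrap, since the given solution is not a priori in any $X^{s,b}_\pm$ with $b > 0$. This is handled by first establishing a version of the bilinear estimate valid when one factor sits only in $L^\infty_t H^s$ (respectively $L^\infty_t H^{s+1/2}$), which suffices to place the solution in $X^{s,b_0}_\pm$ for some $b_0 > 0$ and thereby start the iteration.
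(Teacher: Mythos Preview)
Your overall framework is right---bootstrap the solution from $C([0,T];H^s\times H^{s+1/2})$ into a Bourgain space where the contraction argument of \cite{dfs2006} already gives uniqueness---but the specific scheme you propose has a gap at $s=0$, and it differs from the paper's mechanism in an essential way.

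You propose to keep the spatial index fixed at $(s,s+1/2)$ and climb in the $b$-index alone, landing in $X^{s,1/2+\varepsilon}_\pm \times X^{s+1/2,1/2+\varepsilon}_\pm$. Two problems arise. First, the initiation: from the bare hypothesis $\phi\in L^\infty_t H^{1/2}$, $\psi\in L^\infty_t L^2$, you need $\phi\beta\psi\in X^{0,b_0-1}_\pm$ for some $b_0>0$. With no hyperbolic regularity available on either factor, the weight $\angles{\tau\pm|\xi|}^{b_0-1}$ cannot be exploited, and you are effectively asking for $\phi\beta\psi\in L^2_{t,x}$, i.e.\ the pointwise product law $H^{1/2}(\R^2)\cdot L^2(\R^2)\hookrightarrow L^2(\R^2)$, which fails. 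Second, the target: the uniqueness result in \cite{dfs2006} does not require the iteration space $X^{0,1/2+}_\pm\times X^{1/2,1/2+}_\pm$; it holds already in the weaker space $X^{-5/32-,1/2+}_\pm\times X^{11/32-,1/2+}_\pm$. Aiming for the stronger space is not only unnecessary but, with these tools, likely unattainable.

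The paper's remedy is precisely to \emph{trade spatial regularity for hyperbolic regularity}. The opening move uses the valid product law $H^{1/2}\cdot L^2\hookrightarrow H^{-1/2}$ (no null structure needed) to obtain $\Psi_\pm\in X^{-1/2,1}_\pm$, then interpolates with the trivial $X^{0,0}_\pm$ bound to get $\Psi_\pm\in X^{-\alpha/2,\alpha}_\pm$ for $\alpha\in[0,1]$. From there the argument is not a monotone climb in $b$; it is an alternating improvement of $\Phi_\pm$ and $\Psi_\pm$, in which the spatial indices move at every step (via the null-form estimate \eqref{NullFormEstimate}, Lemma~\ref{Lemma-angle}, and the $H^{s,b}$ product estimates of Theorem~\ref{Thm-prod}), together with repeated interpolation against the $b=0$ endpoint. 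After three rounds one reaches $\Psi_\pm\in X^{-5/32-,1/2+}_\pm$ and $\Phi_\pm\in X^{11/32-,1/2+}_\pm$, which is exactly the \cite{dfs2006} uniqueness class. Your outline would be repaired by replacing the fixed-$s$ iteration with this spatial-for-hyperbolic trade and the alternating scheme.
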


The null structure in the second equation in \eqref{DKG1} is quantified by the following estimate from ~\cite{dfs2006}: For Schwartz functions $\psi, \psi': \R^{1+2}\rightarrow \C^2$ and independent signs $\pm_1$ and $\pm_2$,
\begin{multline}\label{NullFormEstimate}
 \abs{\mathcal F\innerprod{\beta P_{\pm_1}(D) \psi}{P_{\pm_2}(D)
\psi'}(\tau,\xi)}
  \\
  \lesssim \int_{\R^{1+2}} \theta\left(\pm_1\eta,\pm_2(\eta-\xi)\right) \abs{\widetilde
\psi(\lambda,\eta)}
  \abs{\widetilde \psi'(\lambda-\tau,\eta-\xi)} \, d\lambda \, d\eta,
\end{multline}
where $\theta(\xi,\eta)$ denotes the angle between nonzero $\xi,\eta \in \R^2$. The null
structure in the first equation in \eqref{DKG1} is seen to be of the same type via a
duality argument (see the next section). To exploit this structure we need the following estimate:

\begin{lemma}\label{Lemma-angle}
Let $a, b, c \in [0,\frac12]$. For all signs
$(\pm_1,\pm_2)$, all $\lambda,\mu \in \R$ and all nonzero
$\eta,\zeta \in \R^2$,
\begin{equation}\label{AngleEst}
  \theta(\pm_1\eta,\pm_2\zeta)
  \lesssim
  \left(
  \frac{\angles{\abs{\lambda-\mu}-\abs{\eta-\zeta}}}
  {\min(\angles{\eta},\angles{\zeta})}
  \right)^{a}
  +
  \left(
  \frac{ \angles{\lambda\pm_1\abs{\eta}}}
  {\min(\angles{\eta},\angles{\zeta})}
  \right)^{b}+ \left(
  \frac{ \angles{
\mu\pm_2\abs{\zeta}}}
  {\min(\angles{\eta},\angles{\zeta})}
  \right)^{c}.
\end{equation}
\end{lemma}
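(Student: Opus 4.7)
The plan is as follows. Since $\theta(\pm_1\eta,\pm_2\zeta) \le \pi$ is uniformly bounded, I may freely assume the right-hand side is small, and in particular that each of the three quantities
\[
  \frac{\angles{\abs{\lambda-\mu}-\abs{\eta-\zeta}}}{\min(\angles{\eta},\angles{\zeta})}, \qquad
  \frac{\angles{\lambda\pm_1\abs{\eta}}}{\min(\angles{\eta},\angles{\zeta})}, \qquad
  \frac{\angles{\mu\pm_2\abs{\zeta}}}{\min(\angles{\eta},\angles{\zeta})}
\]
is $\le 1$. In this regime, for any exponents $a,b,c \in [0,\tfrac12]$ we have $x^a \ge x^{1/2}$ when $0 \le x \le 1$, so it suffices to prove the estimate in the case $a=b=c=\tfrac12$. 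Moreover, a similar trivial argument lets me assume $\min(\abs\eta,\abs\zeta) \gtrsim 1$, so that $\min(\angles{\eta},\angles{\zeta}) \approx \min(\abs\eta,\abs\zeta)$.

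Set $h_1 = \lambda \pm_1\abs\eta$, $h_2 = \mu \pm_2\abs\zeta$ and $h_3 = \abs{\lambda-\mu}-\abs{\eta-\zeta}$. Writing $\lambda-\mu = (h_1-h_2) \mp_1\abs\eta \pm_2\abs\zeta$ and applying the triangle inequality, I obtain (depending on the sign $\pm_1\pm_2$)
\[
  \bigabs{\abs{\lambda-\mu} - \bigabs{\abs\eta \mp \abs\zeta}} \le \abs{h_1}+\abs{h_2},
\]
where the inner $\mp$ is $-$ if $\pm_1 = \pm_2$ and $+$ otherwise. Combining this with $\abs{\eta-\zeta} = \abs{\lambda-\mu}-h_3$, I conclude in the two cases
\[
  \abs{\eta-\zeta} - \bigabs{\abs\eta-\abs\zeta} \lesssim \angles{h_1}+\angles{h_2}+\angles{h_3}
  \qquad (\pm_1 = \pm_2),
\]
\[
  (\abs\eta+\abs\zeta) - \abs{\eta-\zeta} \lesssim \angles{h_1}+\angles{h_2}+\angles{h_3}
  \qquad (\pm_1 \ne \pm_2).
\]

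Next I invoke the law of cosines. Writing $\Theta = \theta(\pm_1\eta,\pm_2\zeta)$, one has $\Theta = \theta(\eta,\zeta)$ in the first case and $\Theta = \pi - \theta(\eta,\zeta)$ in the second. In both cases the identity
\[
  \abs{\eta\mp\zeta}^2 = (\abs\eta\mp\abs\zeta)^2 + 4\abs\eta\abs\zeta\sin^2(\theta(\eta,\zeta)/2)
\]
(with signs matched appropriately) gives, after factoring and bounding the ``large'' factor by $\abs\eta+\abs\zeta \approx \max(\abs\eta,\abs\zeta)$,
\[
  \min(\abs\eta,\abs\zeta)\,\Theta^2 \lesssim \angles{h_1}+\angles{h_2}+\angles{h_3}.
\]
Dividing by $\min(\angles\eta,\angles\zeta)$, taking a square root, and using $\sqrt{A+B+C} \le \sqrt A + \sqrt B + \sqrt C$ yields the desired estimate for $a=b=c=\tfrac12$, completing the proof.

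The main obstacle is the case distinction based on the relative sign $\pm_1\pm_2$, which changes which form of the law of cosines controls the relevant angle; once this is handled, everything reduces to elementary manipulations with the triangle inequality and a single trigonometric identity. No hard harmonic analysis or dispersive input is needed.
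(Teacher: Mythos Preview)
Your proof is correct and follows essentially the same strategy as the paper: reduce to the case $a=b=c=\tfrac12$ using the trivial bound $\theta\lesssim 1$, and then appeal to the $\tfrac12$-case. The only difference is that the paper simply cites \cite[Section~5.1]{dfs2007} for the $\tfrac12$-case, whereas you supply a self-contained elementary proof via the law of cosines and the hyperbolic symbols $h_1,h_2,h_3$; this is exactly the computation underlying the cited reference, so nothing new is happening.
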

\begin{proof}
In~\cite[Section 5.1]{dfs2007} it is shown that this holds when $a=b=c=1/2$, and since $\theta(\pm_1\eta,\pm_2\zeta)
  \lesssim 1$ the lemma follows.
\end{proof}

Next, we define some function spaces.

We remark that the symbols $\tau\pm \abs{\xi}$ and $\tau\pm
\angles{\xi}_m$ associated with the operators
$-i\partial_t \pm\abs{D}$ and $-i\partial_t \pm\angles{D}_m$ appearing in \eqref{DKG1}
are comparable, in the sense that $ \angles{\tau\pm
\abs{\xi}}\approx \angles{\tau\pm \angles{\xi}_m}$, where
$\angles{\cdot} =\sqrt{ 1 + \abs{\cdot}^2}$. Hence, the $X^{s,b}$ spaces
corresponding to the two operators are in fact equivalent. Specifically, the space $X^{s,b}_\pm$ in question is defined (given $s,b \in \R$) as the the
completion of $\mathcal{S}(\R^{1+2})$ with
respect to the norm
$$
   \norm{u}_{X^{s,b}_\pm} = \bignorm{\angles{\xi}^s
\angles{\tau\pm \abs{\xi}}^b \widetilde
u(\tau,\xi)}_{L^2_{\tau,\xi}}.
$$
We also need the wave-Sobolev space $H^{s,b}$ with norm
$$
  \norm{u}_{H^{s,b}} =
\bignorm{\angles{\xi}^s \angles{\abs{\tau}- \abs{\xi}}^b \widetilde
u(\tau,\xi)}_{L^2_{\tau,\xi}}.
$$
           For $T>0$, let
$X_\pm^{s,b}(S_T)$ and $H^{s,b}(S_T)$ be the respective restriction
spaces to the slab $S_T = (0,T) \times \R^2$.

Clearly, for $b \ge 0$,
\begin{equation}\label{X-H}
 \norm{u}_{H^{s,b}}\le
     \norm{u}_{X_\pm^{s,b}}.
\end{equation}

We also recall that for $b > 1/2$, $H^{s,b}$ is a proper subspace of $C(\R;H^s)$, and the inclusion is continuous. Thus,
$$X_\pm^{s,b} \subset H^{s,b} \subset C(\R,H^s) \quad \text{for $b > 1/2$}.$$

Moreover, it is well known that the
initial problem
$$
\bigl( -i\partial_t \pm \abs{D} \bigr)u =F\in X_\pm^{s, b-1}, \quad
 \qquad u|_{t=0}=f\in H^s,
 $$
 for any $s \in \R$ and $b > \frac12$, has a unique solution $u \in C\left([0,T]; H^s \right)$, and $u$ satisfies
\begin{equation}\label{LinearEst}
\norm{u}_{X^{s,b}_\pm(S_T) }\le C \Bigl( \norm{f}_{H^s}+
\norm{F}_{X^{s,b-1}_\pm(S_T)}\Bigr),
\end{equation}
where $C$ depends on $b$ and $T$. See, e.g., \cite{dfs2006}.

The final key ingredient that we need for the proof of uniqueness is the following product estimate
for the spaces $H^{s,b}$.

\begin{theorem}\label{Thm-prod}
Assume
\begin{align}
  \label{P1}
  &b_0\le 0< b_1, b_2
  \\
  \label{P2}
  &b_0+b_1 + b_2 > \frac12
  \\
  \label{P3}
  &b_0+b_1 > 0, \quad b_0+b_2 > 0
  \\
  \label{P4}
  &s_0 + s_1 + s_2 > \frac32 - (b_0+ b_1 + b_2)
  \\
  \label{P5}
  &s_0 + s_1 + s_2 > 1-(b_0+ b_1)
  \\
  \label{P6}
  &s_0 + s_1 + s_2 > 1 -(b_0+ b_2)
  \\
  \label{P7}
  &s_0 + s_1 + s_2 > \frac12 -b_0
  \\
  \label{P8}
  &s_0 + s_1 + s_2 > \frac34
  \\
  \label{P9}
  &s_0+b_0 + 2(s_1 + s_2) > 1
      \\
  \label{P10}  &s_1 + s_2 \ge -b_0, \quad  s_0 + s_2 \ge 0, \quad
   s_0 + s_1 \ge 0.
\end{align}
 Then the product estimate
\begin{equation}\label{ProdEst2}
   \norm{uv}_{H^{-s_0,-b_0}} \lesssim  \norm{u}_{H^{s_1,b_1}}\norm{v}_{H^{s_2,b_2}}
\end{equation}
holds for all $u, v \in \mathcal{S}(\R^{1+2})$.
\end{theorem}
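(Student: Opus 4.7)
I would dualize and reduce \eqref{ProdEst2} to a symmetric trilinear bound, then decompose dyadically in both frequency and modulation, and finally apply sharp $L^2$-bilinear estimates on the wave cone in $1+2$ dimensions. By duality of the wave--Sobolev spaces (the $L^2_{t,x}$-dual of $H^{-s_0,-b_0}$ is $H^{s_0,b_0}$), the estimate \eqref{ProdEst2} is equivalent to the trilinear bound
\[
\biggl| \int_{\R^{1+2}} u_0 u_1 u_2 \, dt \, dx \biggr| \lesssim \prod_{j=0}^{2} \norm{u_j}_{H^{s_j,b_j}}.
\]
Setting $F_j(\tau,\xi) = \angles{\xi}^{s_j}\angles{\abs{\tau}-\abs{\xi}}^{b_j}\widetilde{u_j}(\tau,\xi)$ and applying Plancherel, this becomes a weighted three-fold convolution estimate on $L^2$ functions under the constraint $\tau_0+\tau_1+\tau_2=0$, $\xi_0+\xi_1+\xi_2=0$.

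\textbf{Dyadic decomposition and bilinear input.} I would next decompose each $F_j$ according to the dyadic size of the frequency $\angles{\xi_j}\sim N_j$ and the distance to the light cone $\angles{\abs{\tau_j}-\abs{\xi_j}}\sim L_j$. The convolution constraint forces the fundamental algebraic bound
\[
\max_{j} L_j \gtrsim \bigl|\,\abs{\xi_0} \pm \abs{\xi_1} \pm \abs{\xi_2}\,\bigr| \approx N_{\max} N_{\min} \sin^2(\theta/2),
\]
where $\theta$ is a suitable angle between two of the spatial frequencies. The main analytic ingredient is then the sharp $L^2$-bilinear estimate for the wave cone in $1+2$ dimensions of Foschi--Klainerman type: for $v_1,v_2$ with Fourier support in $\angles{\xi}\sim N_i$, $\angles{\abs{\tau}-\abs{\xi}}\sim L_i$,
\[
\norm{v_1 v_2}_{L^2_{t,x}} \lesssim N_{\min}^{1/2}\,(L_1 L_2)^{1/2}\,\norm{v_1}_{L^2}\norm{v_2}_{L^2},
\]
with the standard angular refinement when the two frequencies are almost parallel. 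Combining this with Cauchy--Schwarz in the third variable and pigeonholing on the largest modulation yields, for each choice of $(N_j, L_j)_{j=0,1,2}$, a dyadic bound of the form $C(N_j,L_j)\prod_j \norm{F_j}_{L^2}$.

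\textbf{Summation and main obstacle.} The final step is to sum the dyadic bound over all $(N_j,L_j)$, and this is exactly where the hypotheses \eqref{P1}--\eqref{P10} enter. I would split into cases according to which $N_j$ is largest (high-high-to-low versus high-low-to-high) and which $L_j$ is largest. Conditions \eqref{P1}--\eqref{P3} on the $b_j$ guarantee convergence of the modulation sums after pigeonholing; the scaling inequalities \eqref{P4}--\eqref{P9} each control the frequency sum in a distinct regime, each being the \emph{sharp} condition in its regime; and the non-strict inequalities \eqref{P10} cover the degenerate corner cases in which one or two of the $N_j$ are $O(1)$ and the effective sum drops a dimension. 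I expect the main difficulty to be not any single estimate, but the careful case analysis: identifying for every frequency/modulation configuration \emph{which} of \eqref{P4}--\eqref{P10} is the binding condition, and verifying that the bilinear bound chosen in each case is strong enough. Once the case split is in place, each subcase reduces to summing a geometric series whose convergence is equivalent to one of the stated hypotheses.
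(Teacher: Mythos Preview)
Your outline is a reasonable blueprint for how such product estimates are actually proved, but you should be aware that the paper does not prove this theorem at all: its entire ``proof'' is the single sentence ``This follows from Theorems 4.1 and 6.1 in \cite{dfs2010}.'' The result is quoted wholesale from D'Ancona--Foschi--Selberg, \emph{Product estimates for wave-Sobolev spaces in $2+1$ and $1+1$ dimensions}, and the authors here use it purely as a black box to verify the long list of specific $H^{s,b}$ product inequalities in Section~3.

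What you have sketched is, in spirit, the method that the cited reference \cite{dfs2010} uses: dualize to a trilinear convolution estimate, perform a dyadic decomposition in the frequencies $N_j$ and the modulations $L_j$, invoke the sharp dyadic bilinear $L^2$ estimates for thickened pieces of the cone, and then carry out a case analysis (high--high vs.\ high--low, which modulation is largest, low-frequency degenerate cases) so that each of the hypotheses \eqref{P1}--\eqref{P10} is exactly the summability condition in one regime. So your approach is not wrong; it is simply a sketch of the proof that the present paper deliberately outsources. If you intend to include a self-contained argument, be aware that the bilinear block you wrote, $\norm{v_1 v_2}_{L^2}\lesssim N_{\min}^{1/2}(L_1L_2)^{1/2}\norm{v_1}_{L^2}\norm{v_2}_{L^2}$, is only one of several dyadic forms needed in $2+1$ dimensions; the full proof in \cite{dfs2010} requires the complete set of sign-dependent bilinear bounds and a lengthy case split, which is why the present paper simply cites it.
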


This follows from Theorems 4.1 and 6.1 in \cite{dfs2010}.

\section{Proof of Uniqueness}\label{section3}
We now prove Theorem \ref{Thm-main-1}. Without loss of generality take $s=0$. From ~\cite{dfs2006} it is known that the solution is unique in the
iteration space
 $$(\psi_\pm, \phi_\pm)\in X_\pm^{-\frac5{32}-,\frac12+}(S_T)\times X_\pm^{\frac{11}{32}-,\frac12+}(S_T),$$
where we use the notation $a\pm:=a\pm \delta$ for
sufficiently small $\delta>0$.
 Thus, it suffices to show that if
\begin{equation}\label{Class-space}
 (\psi_\pm, \phi_\pm)\in C\left([0,T], L^2\times H^\frac12\right),
\end{equation}
is a solution of
 \eqref{DKG1}, \eqref{Data1}, then
 \begin{equation}\label{Unq-space}
(\psi_\pm, \phi_\pm)\in X_\pm^{-\frac5{32}-,\frac12+}(S_T)\times
X_\pm^{\frac{11}{32}-,\frac12+}(S_T).
\end{equation}

To this end, we split $\psi_\pm$ as
$$\psi_\pm=\psi_\pm^{\text{h}}+\psi_\pm^{\text{l}}+\Psi_\pm,$$
where $\psi_\pm^{\text{h}}$ is the homogeneous part while
$\psi_\pm^{\text{l}}$ and $\Psi_\pm$ are the inhomogeneous parts
corresponding to the linear and bilinear terms, respectively, in the right-hand
side of the first equation in \eqref{DKG1}. Similarly, we split
 $\phi_\pm$ as
$$\phi_\pm=\phi_\pm^{\text{h}}+\Phi_\pm,$$
where $\phi_\pm^{\text{h}}$ and $\Phi_\pm$ are the homogeneous and
inhomogeneous parts of $\phi_\pm$.

First note that by \eqref{LinearEst} and assumption \eqref{Class-space},
 $$\psi_\pm^{\text{h}}
 \in X_\pm^{0,\frac12+}(S_T) \quad \text{ and } \quad \phi_\pm^{\text{h}}\in
X_\pm^{\frac12,\frac12+}(S_T).$$
Moreover, \eqref{Class-space} implies
$$
  \psi_\pm \in X_\pm^{0,0}(S_T),
$$
so \eqref{LinearEst} gives
 $$\psi_\pm^{\text{l}}
 \in X_\pm^{0,1}(S_T).$$ 

Thus it only remains to show that the pair
$\left(\Psi_\pm,\Phi_\pm\right)$ satisfies \eqref{Unq-space}. To
this end we start with \eqref{Class-space} and
use the null structure and product estimates to successively improve the regularity.

Note that by the above, $\Psi_\pm = \psi_\pm - \psi_\pm^{\text{h}} - \psi_\pm^{\text{l}} \in X_\pm^{0,0}(S_T)$, and a
similar argument shows that $\Phi_\pm = \phi_\pm - \phi_\pm^{\text{h}} \in X_\pm^{\frac12,0}$.

\subsection{First estimate for $\Psi_\pm$}
We first claim
\begin{equation}\label{psi:01}
\Psi_\pm \in X_\pm^{-\frac12\alpha, \alpha}(S_T) \quad \text{for} \
\alpha\in [0,1].
\end{equation}
Indeed, using \eqref{LinearEst},
\begin{align*}
\norm{\Psi_\pm}_{ X_\pm^{-\frac12,1}(S_T) } \lesssim
\norm{\phi\beta\psi}_{ X_\pm^{-\frac12, 0}(S_T)}
&=\norm{\phi\beta\psi}_{L_t^{2}H_x^{-\frac12}(S_T)}\\
&\lesssim T^\frac12 \norm{\phi}_{ L_t^\infty
H_x^\frac12(S_T)}\norm{\psi}_{L_t^\infty L_x^2(S_T)},
\end{align*}
where in the last line we used the product Sobolev inequality in the
$x$-variable (see, e.g., the introduction in ~\cite{dfs2010}).
Interpolating $\Psi_\pm \in X_\pm^{-\frac12, 1}(S_T)$ with $\Psi_\pm
\in X_\pm^{0, 0}(S_T)$, we get \eqref{psi:01}.

\subsection{First estimate for $\Phi_\pm$ }\label{subsec-phi1st}
We show that
\begin{equation}\label{phi:01}
\Phi_\pm \in X_\pm^{\frac14-\varepsilon,\frac12+\varepsilon}(S_T).
\end{equation}
Using \eqref{psi:01} with $\alpha=\frac14+2\varepsilon$ and
\eqref{LinearEst}, the claim reduces to the bilinear estimate
\begin{equation*}
 \norm{ \innerprod{\beta P_{\pm_1}(D) \psi
}{P_{\pm_2}(D)
\psi'}}_{X_{\pm_0}^{-\frac34-\varepsilon,-\frac12+\varepsilon}}
\lesssim
 \norm{\psi}_{X_{\pm_1}^{-\frac18-\varepsilon,\frac14+2\varepsilon}}
\norm{\psi'}_{X_{\pm_2}^{-\frac18-\varepsilon,\frac14+2\varepsilon}}.
\end{equation*}

Without loss of generality, take $\pm_1=+, \pm_2=\pm$. By Plancherel
and using \eqref{NullFormEstimate} we reduce to
$$
I\lesssim
\norm{u}_{X_{+}^{-\frac18-\varepsilon,\frac14+2\varepsilon}}
\norm{v}_{X_{\pm}^{-\frac18-\varepsilon,\frac14+2\varepsilon}},
$$
where
$$ I  =  \norm{
  \iint
  \frac{ \theta(\eta, \pm(\eta-\xi)) }{\angles{\xi}^{\frac34+\varepsilon}\angles{\abs{\tau}-\abs{\xi}}^{\frac12-\varepsilon}}
  \widehat u(\lambda,\eta)
  \widehat v(\lambda-\tau,\eta-\xi) \ d\lambda \ d\eta
  }_{L^2_{\tau,\xi}}. $$

Applying Lemma \ref{Lemma-angle} with $a=\frac12-\varepsilon$ and
$b=c=\frac14+2\varepsilon$, and using also \eqref{X-H}, the estimate
for $I$ reduces to the following three estimates:
\begin{equation}\label{phi-1st}
\left\{
\begin{aligned}
  \norm{uv}_{H^{-\frac34-\varepsilon,0}}
  &\lesssim
  \norm{u}_{H^{\frac38-2\varepsilon, \frac14+2\varepsilon}}
  \norm{v}_{H^{-\frac18-\varepsilon,\frac14+2\varepsilon}}
  \\
  \norm{uv}_{H^{-\frac34-\varepsilon, -\frac12+\varepsilon}}
  &\lesssim
  \norm{u}_{H^{\frac18+\varepsilon, 0}}
  \norm{v}_{H^{-\frac18-\varepsilon, \frac14+2\varepsilon}}
  \\
  \norm{uv}_{H^{-\frac34-\varepsilon, -\frac12+\varepsilon}}
  &\lesssim\norm{u}_{H^{-\frac18-\varepsilon, 0}}
   \norm{v}_{H^{\frac18+\varepsilon, \frac14+2\varepsilon}}.
   \end{aligned}
\right.
\end{equation}
 All these hold by Theorem
 \ref{Thm-prod} (via duality for the last two), proving \eqref{phi:01}. Interpolating with $\Phi_\pm \in X_\pm^{\frac12,
0}$ we get moreover
\begin{equation}\label{phi:02}
\Phi_\pm \in
X_\pm^{\frac7{16}-\frac\varepsilon4,\frac18+\frac\varepsilon4}(S_T),
\end{equation}
which we now use to improve \eqref{psi:01}.

\subsection{Second estimate for $\Psi_\pm$}\label{subsec-psi2nd}
  We show that
  \begin{equation}\label{psi:02}
  \Psi_\pm \in X_\pm^{-\frac7{32},\frac12+\varepsilon}(S_T).
\end{equation}
Applying \eqref{LinearEst} and using \eqref{psi:01} (with
$\alpha=\frac12-2\varepsilon$) and \eqref{phi:02}, the claim reduces
to the bilinear estimate
\begin{equation*}\label{NLE-psi:01}
 \norm{P_{\pm_2}(D)(\phi \beta P_{\pm_1}(D)
\psi)}_{X_{\pm_2}^{-\frac7{32},-\frac12 +\varepsilon}} \lesssim
 \norm{\phi}_{X_{\pm_0}^{\frac7{16}-\frac\varepsilon4,\frac18+\frac\varepsilon4}}
\norm{\psi}_{X_{\pm_1}^{-\frac14+\varepsilon,\frac12-2\varepsilon}}.
\end{equation*}
Duality reduces this to
\begin{equation*}
\norm{ \innerprod{\beta P_{\pm_1}(D) \psi }{P_{\pm_2}(D)
\psi'}}_{X_{\pm_0}^{-\frac7{16}+\frac\varepsilon4,-\frac18-\frac\varepsilon4}}
\lesssim
 \norm{\psi'}_{X_{\pm_1}^{\frac7{32},\frac12 -\varepsilon }}
\norm{\psi}_{X_{\pm_2}^{-\frac14+\varepsilon,\frac12-2\varepsilon}}.
\end{equation*}
Taking again $\pm_1=+, \pm_2=\pm$ without loss of generality, applying
Plancherel and using \eqref{NullFormEstimate}, we reduce further to
$$
J\lesssim  \norm{u}_{X_{+}^{\frac7{32},\frac12 -\varepsilon}}
\norm{v}_{X_{\pm}^{-\frac14+\varepsilon,\frac12-2\varepsilon}},
$$
where
$$ J  =  \norm{
  \iint
  \frac{ \theta(\eta, \pm(\eta-\xi)) }{\angles{\xi}^{\frac7{16}-\frac\varepsilon4}\angles{\abs{\tau}-\abs{\xi}}^{\frac18+\frac\varepsilon4}}
  \widehat u(\lambda,\eta)
  \widehat v(\lambda-\tau,\eta-\xi) \ d\lambda \ d\eta
  }_{L^2_{\tau,\xi}}, $$

Applying Lemma \ref{Lemma-angle} with $a=\frac12$,
$b=\frac12-\varepsilon$ and $c=\frac12-2\varepsilon$, and using
\eqref{X-H}, we reduce to the following six
estimates:
\begin{equation}\label{psi-2nd}
\left\{
\begin{aligned}
  \norm{uv}_{H^{-\frac7{16}+\frac\varepsilon4,
  \frac38-\frac{\varepsilon}4}}
  &\lesssim
  \norm{u}_{H^{\frac{23}{32}, \frac12-\varepsilon}}
  \norm{v}_{H^{-\frac14+\varepsilon,\frac12-2\varepsilon}}
  \\
  \norm{uv}_{H^{-\frac7{16}+\frac\varepsilon4,
  \frac38-\frac\varepsilon4}}
  &\lesssim
  \norm{u}_{H^{\frac7{32}, \frac12-\varepsilon}}
  \norm{v}_{H^{\frac14+\varepsilon,\frac12-2\varepsilon}}
  \\
  \norm{uv}_{H^{-\frac7{16}+\frac\varepsilon4,-\frac18-\frac\varepsilon4}}
  &\lesssim
  \norm{u}_{H^{\frac{23}{32}-\varepsilon, 0}}
  \norm{v}_{H^{-\frac14+\varepsilon,\frac12-2\varepsilon}}
  \\
  \norm{uv}_{H^{-\frac7{16}+\frac\varepsilon4,-\frac18-\frac\varepsilon4}}
  &\lesssim\norm{u}_{H^{\frac7{32}, 0}}
   \norm{v}_{H^{\frac14, \frac12-2\varepsilon}}
  \\
  \norm{uv}_{H^{-\frac7{16}+\frac\varepsilon4,-\frac18-\frac\varepsilon4}}
  &\lesssim
  \norm{u}_{H^{\frac{23}{32}-2\varepsilon, \frac12-\varepsilon}}
  \norm{v}_{H^{-\frac14+\varepsilon, 0}}
  \\
  \norm{uv}_{H^{-\frac7{16}+\frac\varepsilon4,-\frac18-\frac\varepsilon4}}
  &\lesssim
  \norm{u}_{ H^{\frac7{32}, \frac12-\varepsilon } }
  \norm{v}_{ H^{\frac14-\varepsilon,0} }.
 \end{aligned}
\right.
\end{equation}
All these hold by Theorem \ref{Thm-prod}
(via duality for the last four), proving \eqref{psi:02}. Interpolation with
$\Psi_\pm \in X_\pm^{0, 0}$ yields
\begin{equation}\label{psi:03}
\Psi_\pm \in X_\pm^{-\frac7{64},\frac14+\frac\varepsilon2}(S_T).
\end{equation}
We now use this to improve \eqref{phi:01}.

\subsection{Second estimate for $\Phi_\pm$ }
We show that
\begin{equation}\label{phi:03}
\Phi_\pm \in
X_\pm^{\frac9{32}-\varepsilon,\frac12+\varepsilon}(S_T).
\end{equation}
Applying \eqref{LinearEst} and using \eqref{psi:03} we reduce to
\begin{equation*}
\label{NLE-phi:02} \norm{ \innerprod{\beta P_{\pm_1}(D) \psi
}{P_{\pm_2}(D)
\psi'}}_{X_{\pm_0}^{-\frac{23}{32}-\varepsilon,-\frac12+\varepsilon,}}
\lesssim
 \norm{\psi}_{X_{\pm_1}^{-\frac7{64},\frac14+\frac\varepsilon2}}
\norm{\psi'}_{X_{\pm_2}^{-\frac7{64},\frac14+\frac\varepsilon2}}.
\end{equation*}
Proceeding as in subsection \ref{subsec-phi1st}, applying Lemma
\ref{Lemma-angle} with $a=\frac12-\varepsilon$ and
$b=c=\frac14+\frac\varepsilon2$, and using \eqref{X-H}, we further reduce to the following three estimates:
\begin{equation}\label{phi-2nd}
\left\{
\begin{aligned}
  \norm{uv}_{H^{-\frac{23}{32}-\varepsilon, 0}}
  &\lesssim
  \norm{u}_{H^{\frac{25}{64}-\varepsilon, \frac14+\frac\varepsilon2}}
  \norm{v}_{H^{-\frac7{64},\frac14+\frac\varepsilon2}}
  \\
  \norm{uv}_{H^{-\frac{23}{32}-\varepsilon, -\frac12+\varepsilon} }
  &\lesssim
  \norm{u}_{H^{\frac{9}{64}+\frac\varepsilon2, 0}}
  \norm{v}_{H^{-\frac7{64},\frac14+\frac\varepsilon2}}
  \\
  \norm{uv}_{H^{-\frac{23}{32}-\varepsilon, -\frac12+\varepsilon}}
  &\lesssim\norm{u}_{H^{-\frac7{64}, 0}}
   \norm{v}_{H^{\frac{9}{64}+\frac\varepsilon2,\frac14+\frac\varepsilon2}}.
   \end{aligned}
\right.
\end{equation}
All these are seen to hold by Theorem \ref{Thm-prod} (via duality for the last two).

Interpolating \eqref{phi:03} with $\Phi_\pm \in X_\pm^{\frac12, 0}$,
we further get
\begin{equation}\label{phi:04}
\Phi_\pm \in
X_\pm^{\frac{25}{64}-\frac\varepsilon2,\frac14+\frac\varepsilon2}(S_T).
\end{equation}

\subsection{Third estimate for $\Psi_\pm$}
  We prove
  \begin{equation}\label{psi:04}
  \Psi_\pm \in X_\pm^{-\frac5{32}-3\varepsilon,\frac12+\varepsilon}(S_T).
\end{equation}
Applying \eqref{LinearEst} and using \eqref{psi:02} and
\eqref{phi:04}, the claim reduces (after duality) to
\begin{equation*}
\norm{ \innerprod{\beta P_{\pm_1}(D) \psi }{P_{\pm_2}(D)
\psi'}}_{X_{\pm_0}^{-\frac{25}{64}+\frac\varepsilon2,-\frac14-\frac\varepsilon2}}
\lesssim
 \norm{\psi'}_{X_{\pm_1}^{\frac5{32}+3\varepsilon,\frac12 -\varepsilon }}
\norm{\psi}_{X_{\pm_2}^{-\frac7{32},\frac12+\varepsilon}}.
\end{equation*}
Proceeding as in subsection \ref{subsec-psi2nd}, applying Lemma
\ref{Lemma-angle} with $a=\frac12$, $b=\frac12-\varepsilon$ and
$b=\frac12$, and using \eqref{X-H}, we reduce to
\begin{equation}\label{psi-3rd}
\left\{
\begin{aligned}
  \norm{uv}_{H^{-\frac{25}{64}+\frac\varepsilon2, \frac14-\frac\varepsilon2}}
  &\lesssim
  \norm{u}_{H^{\frac{21}{32}+3\varepsilon, \frac12-\varepsilon}}
  \norm{v}_{H^{-\frac7{32},\frac12+\varepsilon}}
  \\
  \norm{uv}_{H^{-\frac{25}{64}+\frac\varepsilon2,\frac14-\frac\varepsilon2}}
  &\lesssim
  \norm{u}_{H^{\frac5{32}+3\varepsilon, \frac12-\varepsilon}}
  \norm{v}_{H^{\frac9{32},\frac12+\varepsilon}}
  \\
  \norm{uv}_{H^{-\frac{25}{64}+\frac\varepsilon2,-\frac14-\frac\varepsilon2}}
  &\lesssim
  \norm{u}_{H^{\frac{21}{32}+2\varepsilon, 0}}
  \norm{v}_{H^{-\frac7{32},\frac12+\varepsilon}}
  \\
  \norm{uv}_{H^{-\frac{25}{64}+\frac\varepsilon2,-\frac14-\frac\varepsilon2}}
  &\lesssim\norm{u}_{H^{\frac5{32}+3\varepsilon, 0}}
   \norm{v}_{H^{\frac9{32}-\varepsilon, \frac12+\varepsilon}}
  \\
  \norm{uv}_{H^{-\frac{25}{64}+\frac\varepsilon2,-\frac14-\frac\varepsilon2}}
  &\lesssim
  \norm{u}_{H^{\frac{21}{32}+3\varepsilon, \frac12-\varepsilon}}
  \norm{v}_{H^{-\frac7{32}, 0}}
  \\
  \norm{uv}_{H^{-\frac{25}{64}+\frac\varepsilon2,-\frac14-\frac\varepsilon2}}
  &\lesssim
  \norm{u}_{ H^{\frac5{32}+3\varepsilon, \frac12-\varepsilon } }
  \norm{v}_{ H^{\frac9{32},0} },
 \end{aligned}
\right.
\end{equation}
all of which hold by Theorem
 \ref{Thm-prod} (via duality for the last four).

 \subsection{Third estimate for $\Phi_\pm$ }
We finally prove that
\begin{equation}\label{phi:05}
\Phi_\pm \in
X_\pm^{\frac{11}{32}-3\varepsilon,\frac12+\varepsilon}(S_T).
\end{equation}
By \eqref{LinearEst} and \eqref{psi:04} we reduce to
\begin{equation*}
 \norm{ \innerprod{\beta P_{\pm_1}(D) \psi
}{P_{\pm_2}(D)
\psi'}}_{X_{\pm_0}^{-\frac{21}{32}-3\varepsilon,-\frac12+\varepsilon,}}
\lesssim
 \norm{\psi}_{X_{\pm_1}^{-\frac5{32}-3\varepsilon,\frac12+\varepsilon}}
\norm{\psi'}_{X_{\pm_2}^{-\frac5{32}-3\varepsilon,\frac12+\varepsilon}}.
\end{equation*}
Proceeding as in subsection \ref{subsec-phi1st}, applying Lemma
\ref{Lemma-angle} with $a=\frac12-\varepsilon$ and $b=c=\frac12$,
the estimate reduces to
\begin{equation}\label{phi-3rd}
\left\{
\begin{aligned}
  \norm{uv}_{H^{-\frac{21}{32}-3\varepsilon, 0}}
  &\lesssim
  \norm{u}_{H^{\frac{11}{32}-4\varepsilon, \frac12+\varepsilon}}
  \norm{v}_{H^{-\frac5{32}-3\varepsilon,\frac12+\varepsilon}}
  \\
  \norm{uv}_{H^{-\frac{21}{32}-3\varepsilon, -\frac12+\varepsilon} }
  &\lesssim
  \norm{u}_{H^{\frac{11}{32}-3\varepsilon, 0}}
  \norm{v}_{H^{-\frac5{32}-3\varepsilon,\frac12+\varepsilon}}
  \\
  \norm{uv}_{H^{-\frac{21}{32}-3\varepsilon, -\frac12+\varepsilon}}
  &\lesssim\norm{u}_{H^{-\frac5{32}-3\varepsilon, 0}}
   \norm{v}_{H^{\frac{11}{32}-3\varepsilon, \frac12+\varepsilon}}.
   \end{aligned}
\right.
\end{equation}
All these are true by Theorem
 \ref{Thm-prod}.

 This finishes the proof of Theorem \ref{Thm-main-1}.

\end{document}